\newcommand{\NN}{\mathbb{N}}
\newcommand{\RR}{\mathbb{R}}
\title[]{A constructive Knaster--Tarski proof of the uncountability of the reals}
\author{Ingo Blechschmidt}
\address{Università di Verona \\
Department of Computer Science \\
Strada le Grazie 15 \\
37134 Verona, Italy}
\email{ingo.blechschmidt@univr.it}
\author{Matthias Hutzler}
\address{Universität Augsburg \\
Institut für Mathematik \\
Universitätsstr. 14 \\
86159 Augsburg, Germany}
\email{matthias.ralph.hutzler@student.uni-augsburg.de}
\theoremstyle{definition}
\theoremstyle{plain}
\newtheorem*{thm*}{Theorem}
\theoremstyle{remark}
\newcommand{\defeq}{\vcentcolon=}
\begin{document}

\begin{abstract}
  We give an uncountability proof of the reals which relies on their
  order completeness instead of their sequential completeness. We use neither a
  form of the axiom of choice nor the law of excluded middle, therefore
  the proof applies to the MacNeille reals in any flavor of constructive
  mathematics. The proof leans heavily on Levy's unusual proof of the
  uncountability of the reals.
\end{abstract}

\maketitle
\thispagestyle{empty}

One way to verify the uncountability of the reals is as follows. We first
observe that the usual diagonalization technique shows that the powerset of the
naturals is uncountable. We then show that this powerset is in bijection with
the reals. In constructive mathematics, more precisely the kind of mathematics
which can be carried out in any topos or the kind of mathematics formalizable
in Intuitionistic Zermelo--Fraenkel set theory, the first step is still valid, while the
second might fail. This failure may occur for any of the several possible flavors of the reals
such as the Cauchy reals, the Dedekind reals or the MacNeille reals (which all
coincide in classical mathematics, but might differ in constructive
mathematics). Therefore a different approach is needed.

This note gives a constructive proof that one of these flavors,
the MacNeille reals, is uncountable. To the best of our knowledge, this
is the first result in that direction. However, it is just a baby step towards
an understanding whether any of the more interesting flavors of the reals can constructively be
shown to be uncountable, a problem posed to us by Andrej Bauer who we
gratefully acknowledge. The proof presented here is made possible because the
MacNeille reals -- unlike the Cauchy or Dedekind reals -- can constructively be
shown to be (conditionally) order complete~\cite[Lemma~D4.7.7]{johnstone:elephant}.

Sensibilities of constructive mathematics aside, the proof presented here is
interesting because it uses only the order completeness of the reals, not their
sequential completeness, and because it puts the Knaster--Tarski fixed point
theorem to good use. This fixed point theorem is fundamental to theoretical
computer science, but appears to be seldomly used in classical analysis.
The proof is an adaptation of Levy's unusual proof~\cite{levy:uncountability}.

\begin{thm*}Let~$f : \NN \to \RR$ be a map. Then there is a number~$x_0 \in \RR$
such that for no~$n_0 \in \NN$, $f(n_0) = x_0$.\end{thm*}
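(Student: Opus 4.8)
The plan is to follow the strategy of Levy's proof while sidestepping its one non-constructive move. In that proof one builds a shrinking sequence of intervals and, at stage~$n$, \emph{chooses} on which side of $f(n)$ to continue; that choice presupposes a form of cotransitivity of the order on~$\RR$, which is unavailable for the MacNeille reals. I would instead collapse the entire stage-by-stage induction into a single application of the Knaster--Tarski fixed point theorem — constructively valid for complete lattices — to a monotone ``sweep'' operator, using only conditional order completeness.

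First, reduce to a bounded problem. Let $c\colon\RR\to[0,1]$ be the clamping map $c(t)=\max(0,\min(1,t))$ — a bona fide function of MacNeille reals, built from $\max$ and $\min$ — and put $g\defeq c\circ f\colon\NN\to[0,1]$. It suffices to find $x_0\in[0,1]$ with $\lnot(g(n)=x_0)$ for all $n$: since $c(x_0)=x_0$, any $n$ with $f(n)=x_0$ would yield $g(n)=c(f(n))=c(x_0)=x_0$, a contradiction, so $\lnot(f(n)=x_0)$.

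For the core, fix positive rationals $\epsilon_n$ with $\sum_n\epsilon_n<1$, say $\epsilon_n=2^{-n-2}$. Note that $[0,1]$ is a complete lattice — by conditional order completeness of the MacNeille reals every subset is bounded, hence has a supremum and an infimum in $[0,1]$ — and on it consider
\[
  \Phi(x)\;\defeq\;\sup\Bigl(\{x\}\;\cup\;\bigl\{\,(1-\epsilon_n)\,g(n)+\epsilon_n\;:\;n\in\NN,\ g(n)\le x\,\bigr\}\Bigr).
\]
Each listed value is a convex combination of $g(n)\in[0,1]$ with $1$, so it lies in $[0,1]$; hence the supremum exists and $\Phi$ is a well-defined endofunction of $[0,1]$. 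It is monotone, since enlarging $x$ only enlarges the indexing set $\{\,n:g(n)\le x\,\}$ and replaces the term $x$ in the supremum by a larger one. By Knaster--Tarski, $\Phi$ has a least fixed point $x_0\in[0,1]$, and from $\Phi(x_0)\le x_0$ we read off that $(1-\epsilon_n)\,g(n)+\epsilon_n\le x_0$ whenever $g(n)\le x_0$. Now suppose $g(n)=x_0$ for some $n$; then $g(n)\le x_0$, so this inequality applies and rearranges to $\epsilon_n\,(1-g(n))\le 0$, i.e.\ (dividing by the positive number $\epsilon_n$) to $g(n)\ge 1$. Hence $\lnot(g(n)=x_0)$ holds for every $n$ as soon as we know that $x_0<1$.

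So everything comes down to showing $x_0<1$, and I expect this to be the main obstacle. Unwinding the Knaster--Tarski description of the least fixed point, $x_0=1$ says exactly that no $x<1$ is a pre-fixed point of $\Phi$, equivalently that the half-open intervals $I_n\defeq\bigl[\,g(n),\ (1-\epsilon_n)g(n)+\epsilon_n\,\bigr)$ — of length $\epsilon_n(1-g(n))\le\epsilon_n$, hence of total length $<1$ — cover $[0,1)$. Classically this is impossible since $[0,1)$ has Lebesgue measure~$1$; constructively, and for the MacNeille reals in particular, the non-covering must be argued without locatedness of the endpoints $g(n)$ (which fails) and, ideally, without compactness of $[0,1]$, so that it is precisely here that conditional order completeness rather than sequential completeness has to carry the proof — and it is worth isolating as a separate lemma. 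The remaining ingredients — the reduction, the monotonicity of $\Phi$, the invocation of Knaster--Tarski, and the final rearrangement — should all be routine.
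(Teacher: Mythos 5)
Your reduction to $[0,1]$, the definition and monotonicity of $\Phi$, the appeal to Knaster--Tarski and the final rearrangement are indeed routine, but the argument is not complete: the step you yourself flag as the main obstacle, namely $x_0<1$ (or the weaker $\lnot(x_0\geq 1)$, which is all the contradiction actually needs), is precisely where the entire mathematical content sits, and it is missing. As you observe, it amounts to the assertion that the intervals $[g(n),\,(1-\epsilon_n)g(n)+\epsilon_n)$, of total length less than $1$, fail to cover $[0,1)$. Even classically this is not a routine remainder: it is the countable-subadditivity/Heine--Borel step that carries the standard measure-theoretic uncountability proof. Constructively it is worse: for the MacNeille reals the order is not cotransitive and the points $g(n)$ are not located, so neither compactness of $[0,1]$ nor additivity of Lebesgue measure is available, and no substitute argument is given. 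As it stands, the proposal therefore reduces the theorem to an unproved lemma that is at least as hard as the theorem itself.

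The paper's proof shows how to avoid ever needing such a covering estimate, by a different choice of operator and of fixed point. It works with the operator $x \longmapsto \sup_M \sum_{n\in M} 2^{-n}$, where $M$ ranges over the (Bishop-)finite subsets of $\NN$ with $f[M]<x$: the ``total length'' bookkeeping is built into the \emph{value} of the operator rather than into a covering of $[0,1)$. The only boundedness fact then required is the trivial bound $2$ on its values, which bounds all postfixpoints; one takes the \emph{greatest} postfixpoint $x_0$ (rather than your least fixed point), and if $x_0=f(n_0)$ one checks directly that applying the operator to $x_0+2^{-n_0}$ yields at least $x_0+2^{-n_0}$ (by adjoining $n_0$ to any admissible finite set $M$), so that $x_0+2^{-n_0}$ is a strictly larger postfixpoint, contradicting maximality --- no analogue of ``$x_0<1$'' is needed anywhere. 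If you want to rescue your formulation, you would have to prove your covering lemma for the MacNeille reals, and the natural way to do so is essentially to redo this bookkeeping; it is cleaner to build it into the operator from the start, as the paper does.
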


\begin{proof}The map
\[ g : \RR \longrightarrow \RR,\
  x \longmapsto \sup_M \sum_{n \in M} 2^{-n}, \]
where~$M$ ranges over all those (Bishop-)finite subsets of~$\NN$ such
that~$f[M] < x$, is well-defined (because the sets the suprema are taken of are
inhabited by zero and bounded from above by~$2$), monotone, has a postfixpoint
($0 \leq g(0)$), and has an upper bound for all of its postfixpoints (if~$x
\leq g(x)$, then~$x \leq 2$). By the Knaster--Tarski fixed point theorem, it
therefore has a greatest postfixpoint~$x_0$.

If~$x_0 = f(n_0)$ for a number~$n_0 \in \NN$, then for any finite
subset~$M$ of~$\NN$ such that~$f[M] < x_0$,
\[ \sum_{n \in M} 2^{-n} + 2^{-n_0} = \sum_{n \in M \cup \{n_0\}} 2^{-n}
  \leq g(x_0 + 2^{-n_0}), \]
hence~$g(x_0 + 2^{-n_0}) \geq g(x_0) + 2^{-n_0} \geq x_0 + 2^{-n_0}$.
Thus~$x_0 + 2^{-n_0}$ is a greater postfixpoint than~$x_0$, a contradiction.
\end{proof}

It is possible to unwind the application of the Knaster--Tarski fixed point
theorem to obtain an entirely elementary proof of uncountability. This
unwinding makes the impredicative nature of the proof manifest.

\begin{proof}[Second proof]We consider the same map~$g : \RR \to \RR$ as in the
first proof. Let~$x_0$ be the supremum of the set~$A \defeq \{ x \in \RR \,|\,
x \leq g(x) \}$; this supremum exists because~$A$ is inhabited (by zero)
and bounded from above (by~$2$).

If~$x_0 = f(n_0)$ for a number~$n_0 \in \NN$, then~$x_0 - 2^{-n_0}$ is an
upper bound for~$A$, contradicting the fact that~$x_0$ is the least upper bound
of~$A$: Let~$x \in A$. If~$x > x_0 - 2^{-n_0}$, then~$g(x +
2^{-n_0}) \geq g(x) + 2^{-n_0} \geq x + 2^{-n_0}$ (where the first inequality
is as in the first proof, exploiting that~$x \leq x_0$ by definition of~$x_0$), hence~$x + 2^{-n_0} \in A$,
thus~$x + 2^{-n_0} \leq x_0$, a contradiction. Hence~$x \leq x_0 - 2^{-n_0}$.
\end{proof}

\printbibliography

\end{document}